\numberwithin{equation}{section}
\numberwithin{figure}{section}
\theoremstyle{plain}
\newtheorem{thm}{\protect\theoremname}
  \theoremstyle{remark}
  \newtheorem{rem}[thm]{\protect\remarkname}
  \theoremstyle{plain}
  \newtheorem{cor}[thm]{\protect\corollaryname}
  \theoremstyle{plain}
  \newtheorem{lem}[thm]{\protect\lemmaname}
\theoremstyle{definition}
  \providecommand{\corollaryname}{Corollary}
  \providecommand{\lemmaname}{Lemma}
  \providecommand{\remarkname}{Remark}
\providecommand{\theoremname}{Theorem}
\begin{document}

\title{affine open subsets in $\mathbb{A}^{3}$ without the cancellation
property }

\author{Adrien Dubouloz}

\address{Cnrs, Institut de Math\'ematiques de Bourgogne, Universit\'e de
Bourgogne, 9 avenue Alain Savary - BP 47870, 21078 Dijon cedex, France}

\email{adrien.dubouloz@u-bourgogne.fr}

\keywords{Cancellation problem; Koras-Russell threefolds }

\subjclass[2000]{14R10, 14R20}
\begin{abstract}
We give families of examples of principal open subsets of the affine
space $\mathbb{A}^{3}$ which do not have the cancellation property.
We show as a by-product that the cylinders over Koras-Russell threefolds
of the first kind have a trivial Makar-Limanov invariant. 
\end{abstract}
\maketitle

\section*{Introduction }

The generalized Cancellation Problem asks if two algebraic varieties
$X$ and $Y$ with isomorphic cylinders $X\times\mathbb{A}^{1}$ and
$Y\times\mathbb{A}^{1}$ are isomorphic themselves. Although the answer
turns out to be affirmative for a large class of varieties including
the case when one of the varieties is the affine plane $\mathbb{A}^{2}$
\cite{Iitaka1977a,Miyanishi1980}, counter-examples exists for affine
varieties in any dimension$\geq2$, and the particular case when one
of the two varieties is an affine space $\mathbb{A}^{n}$, $n\geq3$,
still remains a widely open problem. 

The first counter-example for complex affine varieties has been constructed
by Danielewski \cite{Danielewski89} in 1989: he exploited the fact
that the non isomorphic affine surfaces $S_{1}=\left\{ xz=y^{2}-1\right\} $
and $S_{2}=\left\{ x^{2}z=y^{2}-1\right\} $ in $\mathbb{A}_{\mathbb{C}}^{3}$
can be equipped with free actions of the additive group $\mathbb{G}_{a}$
admitting geometric quotients in the form of non trivial $\mathbb{G}_{a}$-bundles
$\rho_{i}:S_{i}\rightarrow\tilde{\mathbb{A}}^{1}$, $i=1,2$ over
the affine line with a double origin. It then follows that the fiber
product $S_{1}\times_{\tilde{\mathbb{A}}^{1}}S_{2}$ inherits simultaneously
the structure of a $\mathbb{G}_{a}$-bundle over $S_{1}$ and $S_{2}$
via the first and the second projection respectively, but since $S_{1}$
and $S_{2}$ are both affine, the latter are both trivial, providing
isomorphisms $S_{1}\times\mathbb{A}^{1}\simeq S_{1}\times_{\tilde{\mathbb{A}}^{1}}S_{2}\simeq S_{2}\times\mathbb{A}^{1}$.
Since then, Danielewski's fiber product trick has been the source
of many new counter-examples in any dimension \cite{Fieseler1994,Dubouloz2007,Finston2008,Dubouloz2011},
some of these being very close to affine spaces either from an algebraic
or a topological point of view. 

However, a counter-example over the field of real numbers was constructed
earlier by Hochster \cite{Hochster1972} using the algebraic counterpart
of the classical fact from differential geometry that the tangent
bundle of the real sphere $S^{2}$ is non trivial but $1$-stably
trivial. His argument actually applies more generally to the situation
when a finitely generated domain $R$ over a field $k$ admits a non
trivial projective module $M$ of rank $n-1\geq1$ such that $M\oplus R\simeq R^{\oplus n}=R^{\oplus n-1}\oplus R$.
Indeed, these hypotheses immediately imply that the varieties $X={\rm Spec}_{R}({\rm Sym}\left(M\right))$
and $Y={\rm Spec}(R[x_{1},\ldots,x_{n}])$ are not isomorphic as schemes
over $Z={\rm Spec}(R)$ while their cylinders are. Of course, there
is no reason in general that $X$ and $Y$ are not isomorphic as $k$-varieties,
but this holds for instance when $Z$ does not admit any dominant
morphism from an affine space $\mathbb{A}_{k}^{n}$ since then any
isomorphism between $X$ and $Y$ necessarily descends to an automorphism
of $Z$ (\cite{Iitaka1977a,Drylo2007}). Recently, Jelonek \cite{Jelonek2009}
gave revival to Hochster idea by constructing families of examples
of non uniruled affine open subsets of affine spaces of any dimension
$\geq8$ with $1$-stably trivial but non trivial vector bundles,
which fail the cancellation property. 

While affine affine open subsets of affine spaces of dimension $\leq2$
always have the cancellation property (see e.g. \emph{loc.cit}), we
derive in this note from a variant of Danielewski's fiber product
trick that cancellation already fails for suitably chosen principal
open subsets of $\mathbb{A}^{3}$. 

As an application of our construction we also obtain that all cylinders
over Koras-Russell threefolds $X_{d,k,l}=\left\{ x^{d}z=y^{l}+x-t^{k}=0\right\} \subset\mathbb{A}^{4}$,
$d\geq2$ and $2\leq l<k$ relatively prime \cite{Kaliman1997}, have
a trivial Makar-Limanov invariant \cite{Makar-Limanov1996}.

\section{Principal open subsets in $\mathbb{A}^{3}$ without the cancellation
property }

For every $d\geq1$ and $l\geq2$, we denote by $B_{d,l}$ the surface
in $\mathbb{A}^{3}={\rm Spec}\left(\mathbb{C}\left[x,y,z\right]\right)$
defined by the equation $f_{d,l}=y^{l}+x-x^{d}z=0$ and by $U_{d,l}=\mathbb{A}^{3}\setminus B_{d,l}\simeq{\rm Spec}(\mathbb{C}\left[x,y,z\right]_{f_{d,l}})$
its open complement. By construction, $U_{d,l}$ comes equipped with
a flat isotrivial fibration $f_{d,l}\mid_{U_{d,l}}:U_{d,l}\rightarrow\mathbb{A}_{*}^{1}={\rm Spec}(\mathbb{C}\left[t^{\pm1}\right])$
with closed fibers isomorphic to the surface $S_{d,l}\subset\mathbb{A}^{3}={\rm Spec}\left(\mathbb{C}\left[X,Y,Z\right]\right)$
defined by the equation $X^{d}Z=Y^{l}+X-1$. A surface $S_{d,l}$
having no non constant invertible function, an isomorphism $\varphi:U_{d,l}\stackrel{\sim}{\rightarrow}U_{d',l'}$
necessarily maps closed fibers of $f_{d,l}$ isomorphically onto that
of $f_{d',l'}$. But since $S_{d,l}$ is isomorphic to $S_{d',l'}$
if and only if $\left(d',l'\right)=\left(d,l\right)$ (see e.g. \cite[Theorem 3.2 and Proposition 3.6]{Dubouloz2009}),
it follows that the threefolds $U_{d,l}$, $d\geq1$, $l\geq2$, are
pairwise non isomorphic. In contrast, we have the following result: 
\begin{thm}
\label{thm:Main} For every fixed $l\geq2$, the fourfolds $U_{d,l}\times\mathbb{A}^{1}$,
$d\geq1$, are all isomorphic. \end{thm}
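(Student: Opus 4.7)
The plan is to apply a variant of Danielewski's fiber product trick. For every $d\ge 1$ I will construct a free $\mathbb{G}_{a}$-action on $U_{d,l}$ whose quotient morphism $q_{d,l}:U_{d,l}\to W_{l}$ realizes $U_{d,l}$ as a principal $\mathbb{G}_{a}$-bundle over a common (non-separated) base $W_{l}$ depending only on $l$. Once this is in hand, for any pair $d,d'\ge 1$ the fiber product $P:=U_{d,l}\times_{W_{l}}U_{d',l}$ inherits two structures of $\mathbb{G}_{a}$-torsor via its two projections to $U_{d,l}$ and $U_{d',l}$; since both factors are affine one has $H^{1}(U_{d,l},\mathcal{O})=H^{1}(U_{d',l},\mathcal{O})=0$, so both torsors are trivial and
$$
U_{d,l}\times\mathbb{A}^{1}\;\simeq\;P\;\simeq\;U_{d',l}\times\mathbb{A}^{1}.
$$

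To produce the action I would consider the derivation
$$
D_{d,l}:=x^{d}\,\partial_{y}+l\,y^{l-1}\,\partial_{z}
$$
on $\mathbb{C}[x,y,z]$. A direct check shows that it is locally nilpotent and annihilates $f_{d,l}$, since $D_{d,l}(f_{d,l})=l\,y^{l-1}x^{d}-x^{d}\cdot l\,y^{l-1}=0$. The resulting $\mathbb{G}_{a}$-action on $U_{d,l}$ is
$$
s\cdot(x,y,z)=\bigl(x,\,y+sx^{d},\,z+x^{-d}\bigl((y+sx^{d})^{l}-y^{l}\bigr)\bigr),
$$
whose third component is a polynomial in $(s,x,y)$ because $(y+sx^{d})^{l}-y^{l}$ is divisible by $x^{d}$. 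This action is free on $U_{d,l}$: on $\{x\neq 0\}$ the translation $y\mapsto y+sx^{d}$ is free, whereas on the closed stratum $\{x=0\}\cap U_{d,l}=\{(0,y,z):y\neq 0\}$ (note that $f_{d,l}(0,y,z)=y^{l}\neq 0$ forces $y\neq 0$) it degenerates to the free translation $z\mapsto z+ls\,y^{l-1}$. The slices $\{y=0\}\cap\{x\neq 0\}$ and $\{x=z=0\}$ provide equivariant sections on the two strata, exhibiting $q_{d,l}$ as a Zariski-locally trivial principal $\mathbb{G}_{a}$-bundle.

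It then remains to check that the base $W_{l}$ does not depend on $d$. Putting $t:=f_{d,l}$, the substitution $z\leftrightarrow t$ identifies $U_{d,l}\cap\{x\neq 0\}$ with $\mathrm{Spec}\,\mathbb{C}[x^{\pm 1},y,t^{\pm 1}]$, on which $D_{d,l}$ becomes simply $x^{d}\partial_{y}$ with ring of invariants $\mathbb{C}[x^{\pm 1},t^{\pm 1}]$; hence the quotient of this open piece is $\mathrm{Spec}\,\mathbb{C}[x^{\pm 1},t^{\pm 1}]$, visibly independent of $d$. Similarly, on $\{x=0\}$ the $D_{d,l}$-invariants of $\mathbb{C}[y^{\pm 1},z]$ form $\mathbb{C}[y^{\pm 1}]$, again independent of $d$. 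These two pieces glue into $W_{l}$ via the compatibility $t\equiv y^{l}\pmod{x}$, in which $d$ plays no role, so $W_{l}$ is indeed the same base for every $d\ge 1$. The main technical obstacle I anticipate is that $W_{l}$ is non-separated: one must check that the gluing genuinely produces an algebraic space over which $q_{d,l}$ is an étale $\mathbb{G}_{a}$-torsor (and not merely a set-theoretic orbit space), so that the Danielewski fiber product $P$ is itself a $\mathbb{G}_{a}$-torsor over each factor. Once this is granted, the triviality of $\mathbb{G}_{a}$-torsors over affine bases closes the argument.
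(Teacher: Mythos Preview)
Your overall strategy coincides with the paper's: use the free $\mathbb{G}_a$-action generated by $D_{d,l}=x^{d}\partial_y+ly^{l-1}\partial_z$, show the geometric quotient is an algebraic space depending only on $l$, and finish with the Danielewski fiber product trick. Your computations of the action, its freeness, and the invariant rings on the two loci $\{x\neq 0\}$ and $\{x=0\}$ are correct, and your heuristic description of $W_l$ (replacing the line $\{x=0\}\simeq\mathrm{Spec}\,\mathbb{C}[t^{\pm1}]$ by its \'etale cover $\mathrm{Spec}\,\mathbb{C}[y^{\pm1}]$ via $t=y^{l}$) matches exactly the description the paper gives in a remark following the proof.

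There is, however, one genuine error. You assert that the slices $\{y=0\}\cap\{x\neq0\}$ and $\{x=z=0\}$ exhibit $q_{d,l}$ as a \emph{Zariski}-locally trivial $\mathbb{G}_a$-bundle. This is false: the second ``slice'' is a section only over the closed stratum $\{x=0\}$, not over a Zariski open neighborhood of it, and in fact the paper proves (in its first remark after the theorem) that the quotient $W_l$ is \emph{not a scheme}, so no Zariski trivialization can exist. Correspondingly, your ``gluing via $t\equiv y^{l}\pmod{x}$'' is not a gluing of schemes along Zariski opens but an \'etale identification, and as written it does not yet define an algebraic space, let alone prove that the resulting object is independent of $d$. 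You flag this at the end, but you under-diagnose it as a separatedness issue; the real point is that the torsor is only \'etale-locally trivial and the base is a genuine algebraic space.

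The paper closes this gap by passing to the explicit \'etale Galois cover $u\mapsto t=u^{l}$ of $\mathbb{A}^{1}_{*}$. After base change, $U_{d,l}$ becomes $S_{d,l}\times\mathbb{A}^{1}_{*}$ with its product $\mathbb{G}_a$-action, which \emph{is} Zariski-locally trivial with quotient $C(l)\times\mathbb{A}^{1}_{*}$ (the affine line with $l$ origins times $\mathbb{A}^{1}_{*}$), manifestly independent of $d$. Everything is $\mu_l$-equivariant, so one descends to identify $W_l$ with the algebraic space $(C(l)\times\mathbb{A}^{1}_{*})/\mu_l$ and $q_{d,l}$ with an \'etale $\mathbb{G}_a$-torsor over it. This is precisely the missing step you need to make your argument rigorous.
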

\begin{proof}
We exploit the fact that every $U_{d,l}$ admits a free $\mathbb{G}_{a}$-action
defined by the locally nilpotent derivation $x^{d}\partial_{y}+ly^{l-1}\partial_{z}$
of its coordinate ring $\mathbb{C}\left[x,y,z\right]_{f_{d,l}}$.
A free $\mathbb{G}_{a}$-action being locally trivial in the \'etale
topology, it follows that a geometric quotient $\nu_{d,l}:U_{d,l}\rightarrow\mathfrak{S}_{d,l}=U_{d,l}/\mathbb{G}_{a}$
exists in the form of an \'etale locally trivial $\mathbb{G}_{a}$-bundle
over a certain algebraic space $\mathfrak{S}_{d,l}$. Then it is enough
to show that for every fixed $l\geq2$, the algebraic spaces $\mathfrak{S}_{d,l}$
are all isomorphic, say to a fixed algebraic space $\mathfrak{S}_{l}$.
Indeed, if so, then for every $d,d'\geq1$, the fiber product $U_{d,l}\times_{\mathfrak{S}_{l}}U_{d',l}$
will be simultaneously a $\mathbb{G}_{a}$-bundle over $U_{d,l}$
and $U_{d',l}$ via the first and second projection respectively whence
will be simultaneously isomorphic to the trivial $\mathbb{G}_{a}$-bundles
$U_{d,l}\times\mathbb{A}^{1}$ and $U_{d',l}\times\mathbb{A}^{1}$
as $U_{d,l}$ and $U_{d',l}$ are both affine. 

The algebraic spaces $\mathfrak{S}_{d,l}$ can be described explicitly
as follows: one checks that the isotrivial fibration $f_{d,l}:U_{d,l}\rightarrow\mathbb{A}_{*}^{1}$
becomes trivial on the Galois \'etale cover $\xi_{l}:\mathbb{A}_{*}^{1}={\rm Spec}(\mathbb{C}\left[u^{\pm1}\right])\rightarrow\mathbb{A}_{*}^{1}$,
$u\mapsto t=u^{l}$, with isomorphism $\Phi_{d,l}:S_{d,l}\times\mathbb{A}_{*}^{1}\stackrel{\sim}{\longrightarrow}U_{d,l}\times_{\mathbb{A}_{*}^{1}}\mathbb{A}_{*}^{1}$
given by $\left(X,Y,Z,u\right)\mapsto(u^{l}X,uY,u^{\left(1-d\right)l}Z,u)$.
The group $\mu_{l}$ of $l$-th roots of unity acts freely on $S_{d,l}\times\mathbb{A}_{*}^{1}$
by $\varepsilon\cdot\left(X,Y,Z,u\right)=\left(X,\varepsilon^{-1}Y,Z,\varepsilon u\right)$
and the $\mu_{l}$-invariant morphism $\pi_{d,l}={\rm pr}_{1}\circ\Phi_{d,l}:S_{d,l}\times\mathbb{A}_{*}^{1}\rightarrow U_{d,l}$
descends to an isomorphism $(S_{d,l}\times\mathbb{A}_{*}^{1})/\mu_{l}\simeq U_{d,l}$.
The $\mathbb{G}_{a}$-action on $U_{d,l}$ lifts via the proper \'etale
morphism $\pi_{d,l}$ to the free $\mathbb{G}_{a}$-action on $S_{d,l}\times\mathbb{A}_{*}^{1}$
commuting with that of $\mu_{l}$ defined by the locally nilpotent
derivation $u^{ld-1}(X^{d}\partial_{Y}+lY^{l-1}\partial_{Z})$ of
its coordinate ring $\mathbb{C}\left[X,Y,Z\right]/\left(X^{d}Z-Y^{l}-X+1\right)\left[u^{\pm1}\right]$.
The principal divisor $\left\{ X=0\right\} $ of $S_{d,l}\times\mathbb{A}_{*}^{1}$
is $\mathbb{G}_{a}$-invariant and it decomposes into the disjoint
union of irreducible divisors $D_{\eta}=\left\{ X=Y-\eta=0\right\} _{\eta\in\mu_{l}}\simeq{\rm Spec}(\mathbb{C}\left[Z\right][u^{\pm1}])$
on which $\mu_{l}$ acts by $D_{\eta}\ni(Z,u)\mapsto(Z,\varepsilon u)\in D_{\varepsilon\eta}$.
Now a similar argument as in \cite[Lemma 1.2]{Fieseler1994} implies
that for every $\eta\in\mu_{l}$, the $\mathbb{G}_{a}$-invariant
morphism ${\rm pr}_{X}\times{\rm id}:S_{d,l}\times\mathbb{A}_{*}^{1}\rightarrow\mathbb{A}^{1}\times\mathbb{A}_{*}^{1}$
restricts on $(S_{d,l}\times\mathbb{A}_{*}^{1})\setminus\bigcup_{\varepsilon\in\mu_{l}\setminus\left\{ \eta\right\} }D_{\varepsilon}$
to a trivial $\mathbb{G}_{a}$-bundle over $\mathbb{A}^{1}\times\mathbb{A}_{*}^{1}$.
Letting $C\left(l\right)$ be the scheme over $\mathbb{A}^{1}={\rm Spec}\left(\mathbb{C}\left[X\right]\right)$
obtained by gluing $l$ copies $C_{\eta}$, $\eta\in\mu_{l}$, of
$\mathbb{A}^{1}={\rm Spec}\left(\mathbb{C}\left[X\right]\right)$
outside their respective origins, it follows that ${\rm pr}_{X}\times{\rm id}$
factors through a $\mu_{l}$-equivariant $\mathbb{G}_{a}$-bundle
$\rho_{d,l}\times{\rm id}:S_{d,l}\times\mathbb{A}_{*}^{1}\rightarrow C\left(l\right)\times\mathbb{A}_{*}^{1}$,
where $\mu_{l}$ acts freely on $C\left(l\right)\times\mathbb{A}_{*}^{1}$
by $C_{\eta}\times\mathbb{A}_{*}^{1}\ni\left(X,u\right)\mapsto\left(X,\varepsilon u\right)\in C_{\varepsilon\eta}\times\mathbb{A}_{*}^{1}$. 

A quotient $(C(l)\times\mathbb{A}_{*}^{1})/\mu_{l}$ exist in the
category of algebraic spaces in the form of a principal $\mu_{l}$-bundle
$\sigma_{l}:C(l)\times\mathbb{A}_{*}^{1}\rightarrow\mathfrak{S}_{l}$,
and the above description implies that $\rho_{d,l}\times{\rm id}$
descends to an \'etale locally trivial $\mathbb{G}_{a}$-bundle $\tilde{\nu}_{d,l}:U_{d,l}\rightarrow\mathfrak{S}_{l}$
for which the diagram \[\xymatrix{S_{d,l}\times \mathbb{A}^1_* \ar[d]_-{\rho_{d,l}\times {\rm id}} \ar[r]^-{\pi_{d,l}} & U_{d,l}\simeq (S_{d,l}\times \mathbb{A}^1_*)/\mu_l \ar[d]^{\tilde{\nu}_{d,l}} \\ C(l)\times \mathbb{A}^1_* \ar[r]^{\sigma_l} & \mathfrak{S}_l,} \] is
cartesian. By virtue of the universal property of categorical quotients
one has necessarily $\mathfrak{S}_{d,l}\simeq\mathfrak{S}_{l}$ for
every $d\geq1$. In particular, the isomorphy type of $\mathfrak{S}_{d,l}$
depends only on $l$, which completes the proof. \end{proof}
\begin{rem}
The algebraic spaces $\mathfrak{S}_{l}=(C\left(l\right)\times\mathbb{A}_{*}^{1})/\mu_{l}$,
$l\geq2$, considered in the proof above cannot be schemes: indeed,
otherwise the image in $\mathfrak{S}_{l}$ of the point $\left(0,1\right)\in C_{1}\times\mathbb{A}_{*}^{1}\subset C\left(l\right)\times\mathbb{A}_{*}^{1}$
would have a Zariski open affine neighborhood $V$. But then the inverse
image of $V$ by the finite \'etale cover $\sigma_{l}:C\left(l\right)\times\mathbb{A}_{*}^{1}\rightarrow\mathfrak{S}_{l}$
would be a $\mu_{l}$-invariant affine open neighborhood of $\left(0,1\right)$
in $C\left(l\right)\times\mathbb{A}_{*}^{1}$, which is absurd since
$\left(0,1\right)$ does not even have a separated $\mu_{l}$-invariant
open neighborhood in $C\left(l\right)\times\mathbb{A}_{*}^{1}$. This
implies in turn that the free $\mathbb{G}_{a}$-action on $U_{d,l}$
defined by the locally nilpotent derivation $x^{d}\partial_{y}+ly^{l-1}\partial_{z}$
is not locally trivial in the Zariski topology. In contrast, the latter
property holds for its lift to $S_{d,l}\times\mathbb{A}_{*}^{1}$
via the \'etale Galois cover $\pi_{d,l}:S_{d,l}\times\mathbb{A}_{*}^{1}\rightarrow U_{d,l}$. 
\end{rem}
\noindent 
\begin{rem}
In Danielewski's construction for the surfaces $S_{i}=\{x^{i}z=y^{2}-1\}\subset\mathbb{A}^{3}$,
$i=1,2$, the geometric quotients $S_{i}/\mathbb{G}_{a}\simeq\tilde{\mathbb{A}}^{1}$,
$i=1,2$, were obtained from the categorical quotients $S_{i}/\!/\mathbb{G}_{a}={\rm Spec}(\mathbb{C}\left[x\right])$
taken in the category of affine schemes by replacing the origin by
two copies of itself, one for each orbit in the zero fiber of the
quotient morphism $q={\rm pr}_{x}:S_{i}\rightarrow\mathbb{A}^{1}$.
For the threefolds $U_{d,l}$, the difference between the quotients
$U_{d,l}/\!/\mathbb{G}_{a}={\rm Spec}(\mathbb{C}\left[x,y,z\right]_{f_{d,l}}^{\mathbb{G}_{a}})$
taken in the category of (affine) schemes and the geometric quotients
$\mathfrak{S}_{l}=U_{d,l}/\mathbb{G}_{a}$ is very similar : indeed,
we may identify $U_{d,l}$ with the closed subvariety of $\mathbb{A}^{3}\times\mathbb{A}_{*}^{1}={\rm Spec}(\mathbb{C}[x,y,z][t^{\pm1}])$
defined by the equation $x^{d}z=y^{l}+x-t$ in such a way that $f_{d,l}:U_{d,l}\rightarrow\mathbb{A}_{*}^{1}$
coincides with the projection ${\rm pr}_{t}\mid_{U_{d,l}}$. Then,
the kernel of the locally nilpotent derivation $x^{d}\partial_{y}+ly^{l-1}\partial_{z}$
of the coordinate ring of $U_{d,l}$ coincides with the subalgebra
$\mathbb{C}[x,t^{\pm1}]$ and so, the $\mathbb{G}_{a}$-invariant
morphism $q={\rm pr}_{x,t}:U_{d,l}\rightarrow\mathbb{A}^{1}\times L={\rm Spec}(\mathbb{C}[x][t^{\pm1}])$
is a categorical quotient in the category of affine schemes. One checks
easily that $q$ restricts to a trivial $\mathbb{G}_{a}$-bundle over
the principal open subset $\left\{ x\neq0\right\} $ of $\mathbb{A}^{1}\times\mathbb{A}_{*}^{1}$
whereas the inverse image of the punctured line $\left\{ x=0\right\} \simeq L$
is isomorphic to $\tilde{L}\times\mathbb{A}^{1}={\rm Spec}(\mathbb{C}[y,t^{\pm1}]/(y^{l}-t)[z])$
where $\mathbb{G}_{a}$ acts by translations on the second factor.
So we may interpret the geometric quotient $\mathfrak{S}_{l}=U_{d,l}/\mathbb{G}_{a}$
as being obtained from $U_{d,l}/\!/\mathbb{G}_{a}=\mathbb{A}^{1}\times L$
by replacing the punctured line $\left\{ x=0\right\} \simeq L$ not
by $l$ disjoint copies of itself but, instead, by the total space
$\tilde{L}$ of the nontrivial \'etale Galois cover ${\rm pr}_{t}:\tilde{L}\rightarrow L$.
\end{rem}
\vspace{0.2cm}

\indent The Koras-Russell threefolds $X_{d,k,l}$ are smooth complex
affine varieties defined by equations of the form $x^{d}z=y^{l}+x-t^{k}$,
where $d\geq2$ and $2\leq l<k$ are relatively prime.%
\footnote{These are called Koras-Russell threefolds of the first kind in \cite{Moser-Jauslin2010}.%
} While all diffeomorphic to the euclidean space $\mathbb{R}^{6}$,
none of these threefold is algebraically isomorphic to the affine
$\mathbb{A}^{3}$. Indeed, it was established by Kaliman and Makar-Limanov
\cite{Makar-Limanov1996,Kaliman1997} that they have fewer algebraic
$\mathbb{G}_{a}$-actions than the affine space $\mathbb{A}^{3}$
: the subring ${\rm ML}(X_{d,k,l})$ of their coordinate ring consisting
of regular functions invariant under all algebraic $\mathbb{G}_{a}$-actions
on $X_{d,k,l}$ is equal to the polynomial ring $\mathbb{C}\left[x\right]$,
while ${\rm ML}\left(\mathbb{A}^{3}\right)$ is \emph{trivial}, consisting
of constants only. However, it was observed by the author in \cite{DuboulozTG2009}
that the Makar-Limanov invariant ${\rm ML}$ fails to distinguish
the cylinder over the so-called Russell cubic $X_{2,2,3}$ from the
affine space $\mathbb{A}^{4}$. This phenomenon holds more generally
for cylinders over all Koras-Russell threefolds $X_{d,k,l}$: 
\begin{cor}
\label{cor:KR-threefolds} All the cylinders $X_{d,k,l}\times\mathbb{A}^{1}$
have a trivial Makar-Limanov invariant. \end{cor}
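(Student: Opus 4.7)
The strategy is to produce four locally nilpotent derivations on the coordinate ring $R=\mathbb{C}[x,y,z,t,w]/(x^{d}z-y^{l}-x+t^{k})$ of $X_{d,k,l}\times\mathbb{A}^{1}$ whose kernels intersect only in $\mathbb{C}$. Three of them are essentially free: the derivation $\delta_{1}=x^{d}\partial_{y}+ly^{l-1}\partial_{z}$ already used on $U_{d,l}$ in the proof of Theorem \ref{thm:Main} annihilates $x^{d}z-y^{l}-x+t^{k}$ and, extended by $\delta_{1}(w)=0$, has kernel $\mathbb{C}[x,t,w]$; the symmetric construction $\delta_{2}=x^{d}\partial_{t}-kt^{k-1}\partial_{z}$ in the variables $(z,t)$ likewise annihilates the defining polynomial and is locally nilpotent since $\delta_{2}^{k+1}(z)=0$ and $\delta_{2}^{2}(t)=0$, with kernel $\mathbb{C}[x,y,w]$; and $\delta_{3}=\partial_{w}$ has kernel $\mathcal{O}(X_{d,k,l})$. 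Intersecting these three kernels inside $R$ yields $\mathbb{C}[x,w]\cap\mathcal{O}(X_{d,k,l})=\mathbb{C}[x]$, so $\mathrm{ML}(X_{d,k,l}\times\mathbb{A}^{1})\subseteq\mathbb{C}[x]$.

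It then suffices to exhibit a single LND $\delta_{4}$ of $R$ with $\delta_{4}(x)\neq 0$: since $R$ is a domain, no nonconstant polynomial in $x$ can lie in $\ker\delta_{4}$, so $\mathbb{C}[x]\cap\ker\delta_{4}=\mathbb{C}$ and the Makar-Limanov invariant is trivial. To construct $\delta_{4}$ I would invoke Theorem \ref{thm:Main}. On $U_{1,l}\subset\mathbb{A}^{3}$, the derivation $D=ly^{l-1}\partial_{x}+(z-1)\partial_{y}$ of $\mathbb{C}[x,y,z]$ annihilates $y^{l}+x-xz$, is locally nilpotent (one has $D^{2}(y)=0$ and $D^{l+1}(x)=0$), and satisfies $D(x)=ly^{l-1}\neq 0$. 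Extending trivially to $U_{1,l}\times\mathbb{A}^{1}$ and transporting via the cancellation isomorphism $U_{d,l}\times\mathbb{A}^{1}\simeq U_{1,l}\times\mathbb{A}^{1}$ of Theorem \ref{thm:Main} produces an LND on $U_{d,l}\times\mathbb{A}^{1}$ still moving $x$, because under that isomorphism the two copies of $x$ descend to the same coordinate on the common algebraic-space quotient $\mathfrak{S}_{l}$. Pulling back along the \'etale $\mu_{k}$-Galois cover $(X_{d,k,l}\cap\{t\neq 0\})\to U_{d,l}$, $t\mapsto t^{k}$, then yields an LND on the principal open subset $(X_{d,k,l}\times\mathbb{A}^{1})\cap\{t\neq 0\}$ whose kernel does not contain $x$.

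The principal obstacle is the last step: verifying that the LND just produced extends regularly across the divisor $\{t=0\}$ to a genuine derivation of the whole ring $R$. Unpacking the explicit trivializations $\Phi_{d,l}$ from the proof of Theorem \ref{thm:Main}, the coefficients of the transported LND a priori belong to the localization $R[t^{-1}]$; one must check that no negative powers of $t$ actually occur and that local nilpotency is preserved. This polynomiality verification — made plausible by the fact that $D$ involves no $t$ at all on the $U_{1,l}$ side — is the main technical content of the corollary. Once achieved, combining $\delta_{1},\delta_{2},\delta_{3},\delta_{4}$ immediately gives $\mathrm{ML}(X_{d,k,l}\times\mathbb{A}^{1})=\mathbb{C}$.
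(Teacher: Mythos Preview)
Your overall strategy matches the paper's: reduce $\mathrm{ML}(X_{d,k,l}\times\mathbb{A}^1)$ to a subring of $\mathbb{C}[x]$ (the paper simply quotes $\mathrm{ML}(X_{d,k,l})=\mathbb{C}[x]$ rather than exhibiting your $\delta_1,\delta_2,\delta_3$, but your route works too), then transport the LND $D=ly^{l-1}\partial_x+(z-1)\partial_y$ from $U_{1,l}\times\mathbb{A}^1$ to $U_{d,l}\times\mathbb{A}^1$ via Theorem~\ref{thm:Main} and lift to $X_{d,k,l}\times\mathbb{A}^1$. The gap is exactly where you flag it, but the resolution is not the one you propose.

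You hope that after transport and pullback no negative powers of $t$ appear. There is no reason to expect this: the cancellation isomorphism of Theorem~\ref{thm:Main} is an isomorphism of localizations $\mathbb{C}[x,y,z]_{f_{d,l}}[v]\simeq\mathbb{C}[x,y,z]_{f_{1,l}}[v]$ and in general introduces genuine $f_{d,l}$-denominators, which become $t$-denominators on $X_{d,k,l}$. The paper's fix is a one-line observation you are missing: since $\mathrm{ML}(U_{d,l}\times\mathbb{A}^1)\simeq\mathrm{ML}(U_{1,l}\times\mathbb{A}^1)=\mathbb{C}[f_{1,l}^{\pm1}]$, the transported LND $\delta$ automatically has $f_{d,l}$ in its kernel. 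Therefore $f_{d,l}^{N}\delta$ is still locally nilpotent for every $N$, and for $N$ large it is a polynomial derivation of $\mathbb{C}[x,y,z][v]$ with $f_{d,l}\in\ker$ but $x\notin\ker$. The lift to $X_{d,k,l}\times\mathbb{A}^1$ is then immediate: the corresponding $\mathbb{G}_a$-action on $\mathbb{A}^4$ leaves the branch locus $B_{d,l}\times\mathbb{A}^1$ invariant, and the paper invokes a standard lifting lemma for $\mathbb{G}_a$-actions along finite covers \'etale off an invariant divisor (equivalently here, extend the derivation by $t\mapsto 0$). So the missing step is not a polynomiality verification but the trick of clearing denominators by a power of an element already known to lie in the kernel.
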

\begin{proof}
We consider $X_{d,k,l}\times\mathbb{A}^{1}$ as the subvariety of
${\rm Spec}\left(\mathbb{C}\left[x,y,z,t\right]\left[v\right]\right)$
defined by the equation $f_{d,l}-t^{k}=0$. Since ${\rm ML}(X_{d,k,l}\times\mathbb{A}^{1})\subset{\rm ML}(X_{d,k,l})=\mathbb{C}\left[x\right]$,
it is enough to construct a locally nilpotent derivation of $\mathbb{C}\left[x,y,z\right]\left[v\right]/(f_{d,l}-t^{k})$
which does not have $x$ in its kernel. One checks easily that ${\rm ML}(U_{1,l})=\mathbb{C}[f_{1,l}^{\pm1}]$
is the intersection of the kernels of the locally nilpotent derivations
$x\partial_{y}+ly^{l-1}\partial_{z}$ and $ly^{l-1}\partial_{x}+\left(z-1\right)\partial_{y}$
of $\mathbb{C}\left[x,y,z\right]_{f_{1,l}}$. Theorem \ref{thm:Main}
above implies in particular that ${\rm ML}(U_{d,l}\times\mathbb{A}^{1})\simeq{\rm ML}(U_{1,l}\times\mathbb{A}^{1})=\mathbb{C}[f_{1,l}^{\pm1}]$
and so, there exists a locally nilpotent derivation $\delta_{d,l}$
of $\Gamma(U_{d,l}\times\mathbb{A}^{1},\mathcal{O}_{U_{d,l}\times\mathbb{A}^{1}})=\mathbb{C}\left[x,y,z\right]_{f_{d,l}}\left[v\right]$
which does not have $x$ in its kernel. Up to multiplying it by a
suitable power of $f_{d,l}\in{\rm Ker}(\delta_{d,l})$, we may further
assume that $\delta_{d,l}$ is the extension to $\mathbb{C}\left[x,y,z\right]_{f_{d,l}}\left[v\right]$
of a locally nilpotent derivation of $\mathbb{C}\left[x,y,z\right]\left[v\right]$
which has $f_{d,l}$ but not $x$ in its kernel. This implies in particular
that $B_{d,l}\times\mathbb{A}^{1}={\rm Spec}\left(\mathbb{C}\left[x,y,z\right]/(f_{d,l})\left[v\right]\right)$
is invariant under the corresponding $\mathbb{G}_{a}$-action on $\mathbb{A}^{4}={\rm Spec}(\mathbb{C}\left[x,y,z\right]\left[v\right])$.
The projection $p={\rm pr}_{x,y,z,v}:X_{d,k,l}\times\mathbb{A}^{1}\rightarrow\mathbb{A}^{4}$
being a finite Galois cover with branch locus $B_{d,l}\times\mathbb{A}^{1}$,
it follows that the $\mathbb{G}_{a}$-action on $\mathbb{A}^{4}$
lifts to a one on $X_{d,k,l}\times\mathbb{A}^{1}$ for which $p:X_{d,k,l}\times\mathbb{A}^{1}\rightarrow\mathbb{A}^{4}$
is $\mathbb{G}_{a}$-equivariant. By construction, the corresponding
locally nilpotent derivation of $\mathbb{C}\left[x,y,z\right]\left[v\right]/(f_{d,l}-t^{k})$
does not have $x$ in its kernel. 
\end{proof}
\noindent In the proof above, we used the following classical fact
that we include here because of a lack of an appropriate reference. 
\begin{lem}
\label{lem:LiftingLemma} Let $X$ be a variety defined over a field
of characteristic zero and equipped with a non trivial $\mathbb{G}_{a}$-action,
let $Z$ be a normal variety and let $p:Z\rightarrow X$ be a finite
surjective morphism. Suppose that there exists a $\mathbb{G}_{a}$-invariant
affine open subvariety $U$ of $X$ over which $p$ restricts to an
\'etale morphism. Then there exists a unique $\mathbb{G}_{a}$-action
on $Z$ for which $p:Z\rightarrow X$ is a $\mathbb{G}_{a}$-equivariant
morphism.\end{lem}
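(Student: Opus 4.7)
The plan is to lift the action morphism $\sigma:\mathbb{G}_{a}\times X\to X$ by first constructing the lift on the dense open subvariety $\mathbb{G}_{a}\times p^{-1}(U)$ via the \'etale lifting property of $p|_{p^{-1}(U)}$, and then extending globally using normality of $Z$ together with Zariski's Main Theorem. I set $\tau=\sigma\circ({\rm id}_{\mathbb{G}_{a}}\times p):\mathbb{G}_{a}\times Z\to X$ and form the fibered product $\Gamma=(\mathbb{G}_{a}\times Z)\times_{X,\tau,p}Z$ together with the finite first projection $\pi:\Gamma\to\mathbb{G}_{a}\times Z$ obtained by base-change from $p$. Producing the required lift $\tilde{\sigma}$ amounts to producing a section $s$ of $\pi$ extending the diagonal section $z\mapsto(0,z,z)$ at $t=0$, and then taking $\tilde{\sigma}={\rm pr}_{Z}\circ s$.

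To obtain $s$ over $\mathbb{G}_{a}\times p^{-1}(U)$, I first observe that the $\mathbb{G}_{a}$-invariance of $U$ forces $\tau$ to take values in $U$ on this open, so that the restriction of $\pi$ to $\mathbb{G}_{a}\times p^{-1}(U)$ is obtained by base-change from the finite \'etale morphism $p|_{p^{-1}(U)}:p^{-1}(U)\to U$ and is itself finite \'etale. Since in characteristic zero the affine line has trivial \'etale fundamental group, every finite \'etale cover of $\mathbb{A}^{1}\times Y$ (with $Y$ geometrically connected) is pulled back from $Y$ via the second projection; applied here, $\Gamma|_{\mathbb{G}_{a}\times p^{-1}(U)}$ is the pullback of its fiber $p^{-1}(U)\times_{U}p^{-1}(U)$ over $\{0\}\times p^{-1}(U)$, and the diagonal section of that fiber yields by pullback a section $s_{V}:\mathbb{G}_{a}\times p^{-1}(U)\to\Gamma$ extending the identity at $t=0$. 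To then extend $s_{V}$ to $\mathbb{G}_{a}\times Z$, I would let $C$ be the reduced closure in $\Gamma$ of the image of $s_{V}$ and consider the induced morphism $\pi|_{C}:C\to\mathbb{G}_{a}\times Z$: it is finite, surjective (its image is closed and contains the dense open $\mathbb{G}_{a}\times p^{-1}(U)$), and birational (as it inverts $s_{V}$ over that open), so since the target is normal, Zariski's Main Theorem forces $\pi|_{C}$ to be an isomorphism, whose inverse is the required global section $s$.

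Finally, the fact that $\tilde{\sigma}={\rm pr}_{Z}\circ s$ actually defines a $\mathbb{G}_{a}$-action on $Z$, together with the uniqueness of such a lift, both follow from the same principle: two morphisms from any product $\mathbb{G}_{a}^{n}\times Z$ to $Z$ that lift the same morphism to $X$ through $p$ and agree at $t=0$ must coincide on the dense open $\mathbb{G}_{a}^{n}\times p^{-1}(U)$ by uniqueness of lifts through the \'etale morphism $p|_{p^{-1}(U)}$, and hence everywhere by reducedness of $Z$. The main obstacle in this approach is the construction of the section $s_{V}$: it is here that the characteristic-zero hypothesis is essential, via the vanishing of $\pi_{1}^{\text{\'et}}(\mathbb{A}^{1})$; the analogous statement in positive characteristic would have to reckon with Artin--Schreier covers of $\mathbb{A}^{1}$ that would obstruct the lift.
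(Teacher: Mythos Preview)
Your argument is correct and takes a genuinely different route from the paper's. The paper works entirely in the language of locally nilpotent derivations: the $\mathbb{G}_a$-action on the affine open $U$ is encoded by an LND $\partial$ of $\Gamma(U,\mathcal{O}_U)$, which lifts uniquely to a derivation of $\Gamma(p^{-1}(U),\mathcal{O}_{p^{-1}(U)})$ because $p$ is \'etale there; the characteristic-zero hypothesis enters through Vasconcelos's theorem, which guarantees that this lifted derivation is again locally nilpotent. The resulting $\mathbb{G}_a$-action on $p^{-1}(U)$ is then extended to all of $Z$ by invoking Seshadri's extension lemma for actions on normal varieties. Your approach bypasses the LND formalism and works directly with the action morphism: the lift over $p^{-1}(U)$ comes from the $\mathbb{A}^{1}$-invariance of the \'etale fundamental group in characteristic zero (this replaces Vasconcelos as the place where the characteristic hypothesis is used), and the extension step is carried out by closing up the graph and applying Zariski's Main Theorem to the normal target $\mathbb{G}_{a}\times Z$ (this replaces the appeal to Seshadri, and indeed is essentially what underlies that lemma).

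Two small remarks. First, your $\mathbb{A}^{1}$-invariance statement for finite \'etale covers of $\mathbb{A}^{1}\times Y$ does not actually require $Y$ to be geometrically connected: one simply argues component by component, which is relevant since $p^{-1}(U)$ need not be connected. Second, in your closure-and-ZMT step you should make explicit that $\mathbb{G}_{a}\times Z$ is normal; this holds because $Z$ is normal and $\mathbb{G}_{a}$ is smooth over a field of characteristic zero. With these points noted, your proof is complete; it has the advantage of being more geometric and of not relying on the external references \cite{Vasconcelos1969} and \cite{Seshadri1972}, at the cost of taking the vanishing of $\pi_{1}^{\text{\'et}}(\mathbb{A}^{1})$ (and its relative form) as input.
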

\begin{proof}
The induced $\mathbb{G}_{a}$-action on the invariant affine open
subvariety $U$ of $X$ is determined by a locally nilpotent derivation
$\partial$ of $\Gamma(U,\mathcal{O}_{U})$. Since $p:p^{-1}\left(U\right)\rightarrow U$
is \'etale and proper, $p^{-1}\left(U\right)$ is an affine open
subvariety of $Z$ and $\partial$ lifts in a unique way to a derivation
of $\Gamma(p^{-1}(U),\mathcal{O}_{p^{-1}\left(U\right)})$ which is
again locally nilpotent by virtue of \cite{Vasconcelos1969}. By construction,
the latter defines a $\mathbb{G}_{a}$-action on $p^{-1}\left(U\right)$
for which the restriction of $p$ to $p^{-1}\left(U\right)$ is equivariant.
Now the assertion follows from  \cite[Lemma 6.1]{Seshadri1972} which
guarantees that the $\mathbb{G}_{a}$-action on $p^{-1}\left(U\right)$
can be uniquely extended to a one on $Z$ with the desired property. 
\end{proof}

\bibliographystyle{amsplain}

\end{document}